\documentclass[11pt,a4paper]{article}

\usepackage[utf8]{inputenc}
\usepackage{amsmath, amssymb, amsthm}
\usepackage{hyperref}
\usepackage{geometry}
\newtheorem{theorem}{Theorem}[section]
\newtheorem{lemma}[theorem]{Lemma}
\newtheorem{corollary}[theorem]{Corollary}
\newtheorem{proposition}[theorem]{Proposition}
\theoremstyle{definition}
\newtheorem{definition}[theorem]{Definition}
\newtheorem{question}{Open Problem}
\newtheorem{remark}[theorem]{Remark}

\title{$m$-Rigidity, Finite-One, and Bounded Finite-One Degrees \\ Inside Typical Many-One Degrees}

\author{
Patrizio Cintioli\\
Mathematics Division, School of Science and Technology\\
University of Camerino, Italy\\
\texttt{patrizio.cintioli@unicam.it}
}

\date{} % niente data

\begin{document}

\maketitle

\begin{abstract}
In recent work, the notion of $m$-rigidity was introduced as a sufficient condition for the
existence of infinite antichains of $1$-degrees inside many-one degrees. Motivated by a recent
preprint of Richter, Stephan, and Zhang on intermediate reducibilities inside many-one degrees, we
study the finite-one and bounded finite-one structure of the many-one degree of an $m$-rigid set.

First, combining bi-immunity of $m$-rigid sets with a theorem of Richter, Stephan, and Zhang,
we show that for Lebesgue-almost every set $A$, and for a comeager class of sets $A$, the
many-one degree $\deg_m(A)$ contains a least finite-one degree. Second, we prove that if
$A$ is $m$-rigid, then $\deg_m(A)$ contains infinitely many pairwise incomparable finite-one
degrees. Third, we show that the arithmetic thickenings of an $m$-rigid set form a strict ascending chain
\[
A_{(1)} <_1 A_{(2)} <_1 \cdots
\]
of $1$-degrees entirely contained within a single bounded finite-one degree. Finally, by introducing bounded calibrated domains, we prove that the bounded finite-one degree of an $m$-rigid set also contains an infinite antichain of $1$-degrees, precluding any linear order.

These results yield almost-sure and comeager partial answers to all three open problems
posed by Richter, Stephan, and Zhang.
\end{abstract}

\noindent\textbf{2020 Mathematics Subject Classification.}
Primary 03D30; Secondary 03D32.

\medskip

\noindent\textbf{Keywords.}
Many-one degrees; finite-one degrees; bounded finite-one degrees; one-one degrees; m-rigidity; bi-immunity; Martin-L\"of randomness.

\section{Introduction}

Many-one reducibility and its refinements by one-one, bounded finite-one, and finite-one reductions give rise to
nested degree structures whose mutual interaction is still only partially understood. For a set
$A \subseteq \omega$, the many-one degree $\deg_m(A)$ may split into several finite-one
degrees, which may split into bounded finite-one degrees, and each of these may in turn split into several one-one degrees. While the
relation between one-one degrees and many-one degrees has been studied since the beginning
of recursion theory, the intermediate reducibilities remain comparatively less explored.

A central question in this area goes back to Odifreddi's Problem~5, which asks whether every
nonrecursive nonirreducible many-one degree contains an infinite antichain of one-one degrees
\cite{Odifreddi1981,Odifreddi1999}. In \cite{Cintioli2026}, a sufficient condition for such
antichains was isolated: a set $A$ is called \emph{$m$-rigid} if every total computable
$m$-autoreduction of $A$ is eventually the identity. It was shown there that every $m$-rigid
set has an infinite antichain of $1$-degrees inside its many-one degree, and that the class of
$m$-rigid sets has Lebesgue measure $1$ and is comeager.

In a recently submitted preprint, Richter, Stephan, and Zhang \cite{RSZ2026} study chains and
antichains inside many-one degrees and related intermediate reducibilities. 
In particular, they generalise Batyrshin's theorem to all nonrecursive and nonirreducible many-one degrees, 
thereby answering Odifreddi's Open Problem~5 \cite{Odifreddi1981,Odifreddi1999} affirmatively.
Their manuscript also extensively investigates finite-one and bounded finite-one degrees inside many-one degrees and ends
with three open problems:
\begin{enumerate}
    \item Does every nonrecursive many-one degree contain a least finite-one degree?
    \item Are there nonrecursive many-one degrees consisting of at least two but at most finitely many finite-one degrees?
    \item Are there bounded finite-one degrees consisting exactly of a dense linearly ordered set of one-one degrees?
\end{enumerate}

The purpose of the present paper is to combine the rigidity viewpoint of \cite{Cintioli2026}
with these new questions on intermediate reducibilities. We do not settle the three problems in full
generality. Instead, we show that all of them admit almost-sure and comeager partial
answers on the class of $m$-rigid sets. Since this class has Lebesgue measure $1$ and is
comeager, these results describe the fine structure of \emph{typical} many-one degrees.

Our main results are as follows:

\begin{itemize}
\item For every $m$-rigid set $A$, the many-one degree $\deg_m(A)$ contains a least
finite-one degree (Section 3).

\item For every $m$-rigid set $A$, a single bounded finite-one degree inside $\deg_m(A)$ already
contains an infinite strict ascending chain of one-one degrees, obtained via arithmetic thickenings (Section 4).

\item For every $m$-rigid set $A$, the many-one degree $\deg_m(A)$ contains infinitely many
pairwise incomparable finite-one degrees, preventing any finite collapse (Section 5). 

\item By restricting our calibrated-domain construction, we prove that a single bounded finite-one degree of an $m$-rigid set contains an infinite antichain of $1$-degrees, proving that typical bounded finite-one degrees are never linearly ordered (Section 6; compare \cite[Remark~19]{RSZ2026} for the case of Martin-L\"of random sets).
\end{itemize}

Thus, within the typical part of Cantor space, the many-one degree possesses a least
finite-one degree, but moving upwards the structure fractures infinitely at both the finite-one and bounded finite-one levels.
Any set witnessing a negative answer to Open Problem~1 or the structural phenomena asked about in Open Problems~2 and~3 must lie outside the class of $m$-rigid sets, and therefore inside a null and meager set.

\section{Preliminaries}

This section collects the basic notation and recalls the two inputs from
\cite{Cintioli2026} that will be used throughout the paper: the bi-immunity of $m$-rigid
sets and the fact that $m$-rigidity holds on a measure-$1$ and comeager class.
We assume standard notation from computability theory. 
For general references see, e.g.,
Odifreddi~\cite{OdifreddiCRT} and Soare~\cite{Soare1987}.
Let $\omega$ denote the set of natural numbers, and $\langle \cdot, \cdot \rangle : \omega \times \omega \to \omega$ be a standard computable pairing function with computable projections $\pi_1$ and $\pi_2$.

\begin{definition}[Reducibilities and degrees]
Let $A,B \subseteq \omega$.
\begin{itemize}
\item $A \le_m B$ if there exists a total computable function $f$ such that
$\forall x\,[\,x \in A \iff f(x) \in B\,]$.
\item $A \le_1 B$ if $A \le_m B$ via an injective computable function $f$.
\item $A \le_{bfin} B$ if $A \le_m B$ via a finite-to-one computable function $f$ bounded by a global constant $c \ge 1$, i.e., $\forall y\,|f^{-1}(y)| \le c$.
\item $A \le_{fin} B$ if $A \le_m B$ via a finite-to-one computable function $f$,
i.e.\ $\forall y\,|f^{-1}(y)|<\infty$.
\end{itemize}
For $r\in\{m,1,bfin,fin\}$ we write $A \equiv_r B$ if $A \le_r B$ and $B \le_r A$.
The \emph{$r$-degree} of $A$ is
\[
\deg_r(A)=\{\,B\subseteq\omega : B \equiv_r A\,\}.
\]
We write $A<_r B$ for $A\le_r B$ and $B\not\le_r A$, and we order $r$-degrees by
$\deg_r(A)\le_r \deg_r(B)$ iff $A\le_r B$.
\end{definition}

\begin{remark}
Clearly, $\le_1\Rightarrow\le_{bfin}\Rightarrow\le_{fin}\Rightarrow\le_m$.
When we speak of ``finite-one degrees inside $\deg_m(A)$'' we mean the family
$\{\deg_{fin}(B): B \equiv_m A\}$ ordered by $\le_{fin}$.
\end{remark}

\begin{definition}[\cite{Cintioli2026}]
A set $A \subseteq \omega$ is \emph{$m$-rigid} if for every total computable function $f$ such that $x \in A \iff f(x) \in A$, we have $f(x) = x$ for almost all $x$.
\end{definition}

A fundamental consequence of $m$-rigidity is that it implies bi-immunity, hence in particular
neither $A$ nor $\overline{A}$ contains an infinite c.e.\ subset.

\begin{lemma}[Cintioli {\cite[Lemma 2.4]{Cintioli2026}}] \label{lem:rigid_biimmune}
Every $m$-rigid set is bi-immune.
\end{lemma}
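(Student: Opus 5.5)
We argue by contraposition. Assume $A$ is $m$-rigid but not bi-immune, and build a total computable $f$ with $x\in A\iff f(x)\in A$ and $f(x)\neq x$ for infinitely many $x$, contradicting $m$-rigidity. Not bi-immune means that either $A$ or $\overline{A}$ has an infinite c.e.\ subset. Since $\overline{A}$ is $m$-rigid whenever $A$ is (the same functions $f$ preserve both), we may assume by symmetry that $A$ contains an infinite c.e.\ set. Every infinite c.e.\ set contains an infinite computable subset, so we fix an infinite computable $R\subseteq A$.

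\textbf{Construction.} Enumerate $R$ in increasing order as $r_0<r_1<r_2<\cdots$; this is computable because $R$ is decidable. Define
\[
f(x)=\begin{cases} r_{i+1} & \text{if } x=r_i \text{ for some } i,\\ x & \text{if } x\notin R.\end{cases}
\]
Then $f$ is total and computable, since membership in $R$ is decidable and the index $i$ is found by search.

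\textbf{Verification.} Take any $x$. If $x\in R$, then $x\in A$ and $f(x)\in R\subseteq A$, so both sides of $x\in A\iff f(x)\in A$ hold. If $x\notin R$, then $f(x)=x$ and the equivalence is trivial. So $f$ is a total computable $m$-autoreduction of $A$. However, $f(r_i)=r_{i+1}\neq r_i$ for every $i$, so $f$ is not eventually the identity. This contradicts $m$-rigidity, and hence $A$ is bi-immune.

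The only point that needs care is the reduction from a c.e.\ subset to a computable one, together with the symmetry step for $\overline{A}$. Neither is a real obstacle: the first is the standard fact that every infinite c.e.\ set has an infinite computable subset (take the elements enumerated in increasing order), and the second holds because the definition of $m$-rigidity is symmetric under complementation.
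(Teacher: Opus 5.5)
Your proof is correct. The paper gives no argument of its own here; it only imports the result from \cite[Lemma~2.4]{Cintioli2026}, so there is nothing to compare against. Your argument is the standard one: the shift $r_i\mapsto r_{i+1}$ along an infinite computable $R\subseteq A$ is a total computable $m$-autoreduction that moves every element of $R$. The case $R\subseteq\overline{A}$ follows by your symmetry observation, since $A$ and $\overline{A}$ have exactly the same $m$-autoreductions.

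One point is worth stating explicitly. Under the paper's convention that immune sets must be infinite, $A$ also fails to be bi-immune when $A$ or $\overline{A}$ is finite. In that case the other side is cofinite, hence an infinite computable set, so your reduction to ``$A$ or $\overline{A}$ has an infinite c.e.\ subset'' does cover it. A single sentence saying so would make the first step airtight.
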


It is a known result that both 1-generic sets and Martin-L\"of random sets are $m$-rigid \cite{Cintioli2026}. Consequently, the class of $m$-rigid sets has Lebesgue measure 1 and is comeager in the Cantor space.

\begin{remark}
Throughout the paper we use the standard notion of immunity for infinite sets:
an infinite set is \emph{immune} if it has no infinite c.e.\ subset, equivalently if it contains no infinite computable subset.
Accordingly, \emph{bi-immune} means that both $A$ and $\overline{A}$ are immune in this sense.
\end{remark}

Before proceeding to our main results, it is conceptually insightful to observe how $m$-rigidity
interacts with the classical hierarchy of intermediate reducibilities discussed above.
From the definitions one immediately has
\[
\le_1 \Rightarrow \le_{bfin} \Rightarrow \le_{fin} \Rightarrow \le_m .
\]
The substantive issue is not the formal containment of these reducibilities, but how much of
this hierarchy is reflected at the level of degree structures inside a fixed many-one degree:
depending on the set, intermediate notions may collapse, or they may induce genuinely distinct
strata of degrees.

For computably enumerable sets, the first systematic separations involving finite-one and
related bounded variants go back to Maslova~\cite{Maslova1979}.
More recently, Richter, Stephan, and Zhang~\cite{RSZ2026} analysed the fine internal structure
of finite-one and bounded finite-one degrees inside many-one degrees for arbitrary sets,
exhibiting, for example, a nonrecursive bounded finite-one degree whose $1$-degrees form a
strict $\omega$-chain \cite[Theorem~18]{RSZ2026}, and showing that every nonrecursive
nonirreducible finite-one degree contains an antichain of bounded finite-one degrees
\cite[Remark~20]{RSZ2026}.

At the opposite extreme, for highly structured sets this hierarchy can collapse completely:
by Myhill's isomorphism theorem, every set many-one equivalent to the Halting Problem is
computably isomorphic to it; hence, its many-one degree collapses into a single one-one degree.

By contrast, for $m$-rigid sets the hierarchy is already strict \emph{within} a single many-one
degree: Proposition~\ref{prop:strict_hierarchy} shows that each implication above is proper
inside $\deg_m(A)$ for every $m$-rigid set $A$. We prove this after introducing arithmetic
thickenings and establishing Theorem~\ref{thm:strict_chain}; see Section~4.

\section{A Least Finite-One Degree for Typical Sets}

We first address the general existence of a least finite-one degree inside a many-one degree, which directly partially answers Open Problem 1 raised by Richter, Stephan, and Zhang.

\begin{question}[Richter, Stephan, Zhang \cite{RSZ2026}]
Does every nonrecursive many-one degree contain a least finite-one degree?
\end{question}

In their paper, they established a profound connection between bi-immunity and the lower bound of finite-one reducibility within an $m$-degree.

\begin{theorem}[Richter, Stephan, Zhang {\cite[Theorem 16(4)]{RSZ2026}}] \label{thm:rsz_16.4}
If $A$ is a bi-immune set, then the many-one degree of $A$ contains a least finite-one degree.
\end{theorem}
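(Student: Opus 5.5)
The plan is to show that $\deg_{fin}(A)$ itself is the least finite-one degree inside $\deg_m(A)$. It suffices to prove that every many-one reduction \emph{from} a bi-immune set is automatically finite-one. Then for every $B \equiv_m A$ the given many-one reduction of $A$ to $B$ witnesses $A \le_{fin} B$, so $\deg_{fin}(A) \le_{fin} \deg_{fin}(B)$. Since $A \equiv_m A$, the degree $\deg_{fin}(A)$ belongs to the family $\{\deg_{fin}(B) : B \equiv_m A\}$, and it lies below every member of that family, so it is its least element.

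The key step is the following observation. Let $f$ be a total computable function with $x \in A \iff f(x) \in B$, and fix any $y$. The fibre $f^{-1}(y)=\{x : f(x)=y\}$ is computable, since membership is decided by computing $f(x)$ and comparing it with $y$. If $y \in B$, then every $x$ in the fibre satisfies $x \in A$, so $f^{-1}(y) \subseteq A$. If $y \notin B$, then $f^{-1}(y) \subseteq \overline{A}$. Suppose some fibre were infinite. It would then be an infinite computable (in particular c.e.) subset of $A$ or of $\overline{A}$, contradicting bi-immunity, in the sense fixed in the Remark following Lemma~\ref{lem:rigid_biimmune}. Hence every fibre is finite, and $f$ is a finite-one reduction.

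There is no substantial obstacle here. The only points needing care are bookkeeping. First, bi-immunity is used in the form ``no infinite computable subset of $A$ or $\overline{A}$'', which is the convention adopted in the paper. Second, the ordering on finite-one degrees is well defined. Third, ``least'' is taken inside the family of finite-one degrees of sets $m$-equivalent to $A$, as in the Remark after the definition of the reducibilities. The reverse direction $B \le_m A$ is never needed for leastness; it only ensures that $B$ ranges over $\deg_m(A)$.
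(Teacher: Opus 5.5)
Your proof is correct and complete. The paper gives no argument of its own for this statement: it imports the result by citation from \cite{RSZ2026} and uses it as a black box in Corollary~\ref{cor:prob1_answer}, so there is no internal proof to compare with.

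Your fibre argument is a natural self-contained justification. Each fibre $f^{-1}(y)$ of an $m$-reduction $f$ of $A$ to $B$ is a computable subset of $A$ or of $\overline{A}$, so bi-immunity forces it to be finite. This actually proves slightly more than the statement: $A\le_m B$ alone already gives $A\le_{fin}B$. Hence the least finite-one degree is explicitly $\deg_{fin}(A)$, and the hypothesis $B\le_m A$ is needed only to place $B$ inside $\deg_m(A)$.

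You deferred one bookkeeping item: that $\le_{fin}$ is transitive, so the order on $fin$-degrees is well defined. This is immediate, since $(g\circ f)^{-1}(z)=f^{-1}(g^{-1}(z))$ is a finite union of finite sets.

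Your argument uses the same principle the paper relies on in Proposition~\ref{prop:strict_hierarchy}(3) and Theorem~\ref{thm:not_fin_reducible}: an infinite c.e.\ set on which membership in $A$ is constant contradicts bi-immunity. Here you apply it to the fibres of a reduction out of $A$, whereas the paper applies it to images of infinite columns under finite-one reductions into $A$ (or into $B_S$).
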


By naturally combining this theorem with the structural properties of $m$-rigid sets, we obtain a straightforward but powerful measure-theoretic consequence.

\begin{corollary} \label{cor:prob1_answer}
For Lebesgue-almost every set $A$ and in the sense of Baire category (comeager), the many-one degree $\deg_m(A)$ contains a least $fin$-degree. Thus, Open Problem 1 of \cite{RSZ2026} has an affirmative answer for typical sets.
\end{corollary}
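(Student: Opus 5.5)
The plan is to chain three earlier results. First, recall from the Preliminaries that the class $\mathcal{R}$ of $m$-rigid sets contains every Martin-L\"of random set and every $1$-generic set. The Martin-L\"of random sets form a class of Lebesgue measure $1$, and the $1$-generic sets form a comeager class, so $\mathcal{R}$ is both of measure $1$ and comeager.

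Second, by Lemma~\ref{lem:rigid_biimmune}, every $A \in \mathcal{R}$ is bi-immune. Third, for any bi-immune $A$, Theorem~\ref{thm:rsz_16.4} gives that $\deg_m(A)$ contains a least finite-one degree.

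Now let $\mathcal{C}$ be the class of sets $A$ such that $\deg_m(A)$ has a least $fin$-degree. The last two steps show $\mathcal{R} \subseteq \mathcal{C}$. Since a superset of a measure-$1$ set has full outer measure, $\mathcal{C}$ has measure $1$ in the sense of the corollary ("almost every $A$"). Likewise, a superset of a comeager set is comeager.

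To state the corollary cleanly we do not need measurability of $\mathcal{C}$. It suffices to work directly with the class of Martin-L\"of random sets, which is measurable of measure $1$, and the class of $1$-generic sets, which is comeager.

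Finally, Open Problem~1 concerns nonrecursive degrees. A bi-immune set is in particular nonrecursive, since if $A$ were recursive then whichever of $A$ and $\overline{A}$ is infinite would be an infinite computable subset of itself. So every typical $A$ lies in a nonrecursive many-one degree, and the affirmative answer applies there.

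There is no real obstacle: the only point needing care is this bookkeeping of "almost every" and "comeager" via the explicit subclasses.
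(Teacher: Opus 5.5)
Your proposal is correct and follows the paper's own route: $m$-rigid sets form a measure-$1$, comeager class (via the Martin-L\"of random and $1$-generic sets), they are bi-immune by Lemma~\ref{lem:rigid_biimmune}, and Theorem~\ref{thm:rsz_16.4} finishes the argument. Your extra bookkeeping on measurability and on the nonrecursiveness of bi-immune sets is sound; it only makes explicit what the paper leaves implicit.
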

\begin{proof}
As established in \cite{Cintioli2026}, the class of $m$-rigid sets has Lebesgue measure 1 and is comeager. By Lemma \ref{lem:rigid_biimmune}, every $m$-rigid set is bi-immune. Therefore, by Theorem \ref{thm:rsz_16.4}, every $m$-rigid set belongs to an $m$-degree that contains a least $fin$-degree. Any counterexample lacking a least $fin$-degree must therefore be non-$m$-rigid.
\end{proof}

\section{Infinite Chains of 1-Degrees inside a Bounded Finite-One Degree}

Having established in the previous section that typical sets possess a least $fin$-degree, we now investigate the internal complexity of individual intermediate degrees. Arithmetic thickenings provide a uniform family of representatives whose one-one degrees form a strict ascending chain. We will show that this infinite chain lives entirely inside a \emph{single bounded finite-one degree}.
Richter, Stephan, and Zhang constructed an example of a nonrecursive bounded finite-one degree
whose $1$-degrees form a strict $\omega$-chain \cite[Theorem~18]{RSZ2026}.
Here we show that an $\omega$-chain of $1$-degrees already occurs uniformly inside $\deg_{bfin}(A)$
for every $m$-rigid set $A$, via arithmetic thickenings.
To achieve this, we introduce the contiguous arithmetic block thickening of a set. Standard cylinder constructions (e.g., $A \times \omega$) introduce infinite computable sets in their complements, inherently preventing finite-one reductions to bi-immune sets. Our block thickening avoids this issue and is naturally adapted to bounded finite-one reducibility.

\begin{definition}
For any set $A$ and any integer $k \geq 1$, the \emph{$k$-thickening} of $A$ is defined as:
$$ A_{(k)} = \{ x \in \omega \mid \lfloor x / k \rfloor \in A \} $$
\end{definition}

\begin{lemma} \label{lem:bfin_equiv}
For any set $A$ and any integer $k \geq 1$, $A \equiv_{bfin} A_{(k)}$.
\end{lemma}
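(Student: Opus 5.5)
The lemma asks for two explicit reductions, one in each direction, and I plan to check for each that it is computable, that it is a many-one reduction, and that its fibres have uniformly bounded size. Throughout, $k\ge 1$ is fixed.

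\emph{From $A_{(k)}$ to $A$.} Take $g(x)=\lfloor x/k\rfloor$. This is total and computable. It is a reduction by the definition of the thickening: $x\in A_{(k)}$ holds exactly when $\lfloor x/k\rfloor\in A$. For any $y$, the fibre $g^{-1}(y)$ is the block $\{ky,ky+1,\dots,ky+k-1\}$. So every fibre has exactly $k$ elements, and $g$ witnesses $A_{(k)}\le_{bfin}A$ with constant $c=k$.

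\emph{From $A$ to $A_{(k)}$.} Take $f(x)=kx$. Since $\lfloor kx/k\rfloor=x$, we get $x\in A$ if and only if $f(x)\in A_{(k)}$. The map $f$ is injective because $k\ge1$. This gives $A\le_1 A_{(k)}$, and hence $A\le_{bfin}A_{(k)}$ with $c=1$ by the remark that $\le_1$ implies $\le_{bfin}$.

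Putting the two reductions together gives $A\equiv_{bfin}A_{(k)}$. No step here is a real obstacle. The one point that needs care is the fibre bound for $g$: it must be stated as a single global constant ($k$), not just finiteness of each fibre, because the definition of $\le_{bfin}$ requires a uniform bound. For later use I will also record that the injective map $f$ actually gives $A\le_1 A_{(k)}$, not just $A\le_{bfin}A_{(k)}$.
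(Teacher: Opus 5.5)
Your proposal is correct and takes the same approach as the paper's proof. The paper also uses the injective map $x \mapsto kx$ for $A \le_1 A_{(k)}$ and the exactly $k$-to-one map $x \mapsto \lfloor x/k \rfloor$ for $A_{(k)} \le_{bfin} A$ with global bound $c=k$; only the names of the two functions are swapped.
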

\begin{proof}
We have $A \leq_1 A_{(k)}$ via the injective computable function $g(x) = kx$. Since injectivity implies a bound of $c=1$, we automatically have $A \leq_{bfin} A_{(k)}$.

Conversely, $A_{(k)} \leq_{bfin} A$ via $f(x) = \lfloor x / k \rfloor$. The function $f$ maps exactly $k$ consecutive integers (the set $\{ky, ky+1, \dots, ky+k-1\}$) to a single value $y$. Therefore, $f$ is strictly bounded finite-one (exactly $k$-to-$1$), with global bound $c=k$, and $x \in A_{(k)} \iff \lfloor x / k \rfloor \in A$.
Thus, $A \equiv_{bfin} A_{(k)}$.
\end{proof}

Although Lemma~\ref{lem:bfin_equiv} shows that all thickenings $A_{(k)}$ lie in the exact same bounded finite-one degree, for an $m$-rigid set $A$ their $1$-degrees form a strict ascending chain.

\begin{theorem} \label{thm:strict_chain}
If $A$ is an $m$-rigid set, then for all $k \ge 1$, $A_{(k)} <_1 A_{(k+1)}$.
\end{theorem}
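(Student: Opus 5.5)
The plan is to prove the two halves of $A_{(k)} <_1 A_{(k+1)}$ separately. The reduction $A_{(k)} \le_1 A_{(k+1)}$ will be an explicit block-embedding. The non-reduction $A_{(k+1)} \not\le_1 A_{(k)}$ will come from $m$-rigidity combined with a pigeonhole argument on blocks.

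\emph{Reduction.} Write each $x$ uniquely as $x = ky + i$ with $0 \le i < k$, and set $h(ky+i) = (k+1)y + i$. This $h$ is computable and injective, since distinct pairs $(y,i)$ go to distinct pairs $(y,i)$ with $i<k+1$. It maps the $y$-th block of size $k$ into the $y$-th block of size $k+1$. Hence $\lfloor h(x)/(k+1)\rfloor = y = \lfloor x/k\rfloor$, so $x \in A_{(k)} \iff h(x) \in A_{(k+1)}$. No hypothesis on $A$ is needed for this half.

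\emph{Non-reduction.} Suppose, towards a contradiction, that $f$ is an injective total computable function with $x \in A_{(k+1)} \iff f(x) \in A_{(k)}$. For each $i \in \{0,\dots,k\}$, define the total computable function
\[
g_i(y) = \bigl\lfloor f((k+1)y+i)/k \bigr\rfloor .
\]
Unwinding the definitions gives
\[
y \in A \iff (k+1)y+i \in A_{(k+1)} \iff f((k+1)y+i) \in A_{(k)} \iff g_i(y) \in A .
\]
So each $g_i$ is a total computable $m$-autoreduction of $A$. By $m$-rigidity, there is $N_i$ such that $g_i(y) = y$ for all $y \ge N_i$.

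Take $y \ge \max_i N_i$. Then the $k+1$ numbers $f((k+1)y), \dots, f((k+1)y+k)$ all satisfy $\lfloor \cdot / k\rfloor = y$. They therefore all lie in the block $\{ky, \dots, ky+k-1\}$, which has only $k$ elements. By pigeonhole two of them coincide, contradicting the injectivity of $f$. Hence $A_{(k+1)} \not\le_1 A_{(k)}$.

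The only delicate step is spotting that the composition of $f$ with the block projection, restricted to each fixed residue $i$, is a genuine autoreduction of $A$, so that rigidity can be applied $k+1$ times simultaneously. After that, the pigeonhole step is immediate. Note that bi-immunity (Lemma~\ref{lem:rigid_biimmune}) is not needed; only the definition of $m$-rigidity is used.
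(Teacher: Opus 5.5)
Your proof is correct and follows the paper's argument essentially step for step. You use the same block embedding $(k+1)\lfloor x/k\rfloor + (x \bmod k)$ for the reduction, and the same $k+1$ autoreductions $y \mapsto \lfloor f((k+1)y+i)/k\rfloor$, followed by simultaneous rigidity and pigeonhole on a block of size $k$, for strictness.
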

\begin{proof}
First, we show $A_{(k)} \leq_1 A_{(k+1)}$. Consider the function $p(x) = (k+1) \lfloor x / k \rfloor + (x \bmod k)$. 
The function $p$ is strictly increasing, thus injective. Moreover, since $(x \bmod k) < k < k+1$, we have $\lfloor p(x) / (k+1) \rfloor = \lfloor x / k \rfloor$.
Therefore, $x \in A_{(k)} \iff \lfloor x / k \rfloor \in A \iff \lfloor p(x) / (k+1) \rfloor \in A \iff p(x) \in A_{(k+1)}$. Hence $A_{(k)} \leq_1 A_{(k+1)}$.

To show the reduction is strict, assume for a contradiction that $A_{(k+1)} \leq_1 A_{(k)}$ via some injective computable function $h$.
Thus, $x \in A_{(k+1)} \iff h(x) \in A_{(k)}$, which implies $\lfloor x / (k+1) \rfloor \in A \iff \lfloor h(x) / k \rfloor \in A$.
For each $j \in \{0, \dots, k\}$, define the computable function $f_j(y) = \lfloor h((k+1)y + j) / k \rfloor$.
Notice that $\lfloor ((k+1)y + j) / (k+1) \rfloor = y$. Therefore, $y \in A \iff f_j(y) \in A$.
This means each $f_j$ is a valid $m$-autoreduction of $A$.

Because $A$ is $m$-rigid, for each $j$, $f_j(y) = y$ for almost all $y$. 
Since there are only $k+1$ functions $f_j$, we can take $N$ larger than all their finitely many exceptional points.
Thus, there exists an $N \in \omega$ such that for all $y \geq N$ and all $j \in \{0, \dots, k\}$, $f_j(y) = y$.
Fix a $y \geq N$. The definition of $f_j$ yields $\lfloor h((k+1)y + j) / k \rfloor = y$.
This implies that for each $j$, the value $h((k+1)y + j)$ belongs to the set $V_y = \{ ky, \dots, ky+k-1 \}$, which has size $k$.
However, we are evaluating the injective function $h$ on $k+1$ distinct inputs. Since $h$ maps $k+1$ distinct elements into $V_y$ of size $k$, the Pigeonhole Principle dictates that $h$ cannot be injective, a contradiction. Thus, $A_{(k+1)} \not\leq_1 A_{(k)}$.
\end{proof}

\begin{corollary} \label{cor:chain_bfin}
The bounded finite-one degree of any $m$-rigid set does not collapse into a single $1$-degree, as it contains an infinite strict ascending chain of $1$-degrees.
\end{corollary}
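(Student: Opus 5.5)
The plan is to read the corollary off directly from Lemma~\ref{lem:bfin_equiv} and Theorem~\ref{thm:strict_chain}. Fix an $m$-rigid set $A$ and consider the sequence of arithmetic thickenings $A_{(1)}, A_{(2)}, A_{(3)}, \dots$, noting that $A_{(1)} = A$ because $\lfloor x/1\rfloor = x$.

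\emph{Membership in the degree.} By Lemma~\ref{lem:bfin_equiv}, $A_{(k)} \equiv_{bfin} A$ for every $k \ge 1$. Hence every $A_{(k)}$ lies in $\deg_{bfin}(A)$, and therefore so does each $1$-degree $\deg_1(A_{(k)})$. Since $\le_1$ implies $\le_{bfin}$, each such $1$-degree is contained entirely in $\deg_{bfin}(A)$.

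\emph{Strictness of the chain.} Theorem~\ref{thm:strict_chain} gives $A_{(k)} <_1 A_{(k+1)}$ for every $k$. Transitivity of $\le_1$ then yields $A_{(j)} \le_1 A_{(k)}$ whenever $j<k$. For strictness, suppose instead that $A_{(k)} \le_1 A_{(j)}$ for some $j<k$. Chaining with $A_{(j)} \le_1 A_{(k-1)}$ (or using it directly if $j = k-1$) would give $A_{(k)} \le_1 A_{(k-1)}$, contradicting the theorem. So the $1$-degrees $\deg_1(A_{(k)})$ are pairwise distinct and form a strict ascending $\omega$-chain inside $\deg_{bfin}(A)$. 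In particular, $\deg_{bfin}(A)$ contains at least two $1$-degrees, so it does not collapse into a single $1$-degree.

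There is no real obstacle here. The only point needing care is the transitivity argument, which upgrades the successive strict inequalities to strictness between arbitrary pairs and so rules out repeated $1$-degrees in the chain.
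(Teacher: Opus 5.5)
Your proof is correct and is essentially the paper's argument. The paper gives no separate proof; it reads the corollary off from Lemma~\ref{lem:bfin_equiv}, which puts every $A_{(k)}$ in $\deg_{bfin}(A)$, and Theorem~\ref{thm:strict_chain}, which gives the successive strict $1$-inequalities, and your transitivity step only spells out the pairwise distinctness of the $1$-degrees that the paper leaves implicit.
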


\medskip

We now provide the proof of the strictness phenomenon announced in Section~2:
for an $m$-rigid set, the classical reducibility hierarchy is already strict inside a single many-one degree.

\begin{proposition} \label{prop:strict_hierarchy}
If $A$ is an $m$-rigid set, the implications $\le_1 \implies \le_{bfin} \implies \le_{fin} \implies \le_m$
are all strict inside the many-one degree of $A$.
Specifically, there exist sets $B, C, D \equiv_m A$ such that:
\begin{enumerate}
    \item $B \le_{bfin} A$ but $B \not\le_1 A$.
    \item $C \le_{fin} A$ but $C \not\le_{bfin} A$.
    \item $D \le_m A$ but $D \not\le_{fin} A$.
\end{enumerate}
\end{proposition}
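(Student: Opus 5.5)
The plan is to give one explicit witness for each clause, always a ``thickening'' of $A$ with $A\le_1$ of it, so that membership in $\deg_m(A)$ is automatic. Non-reducibility will come either from $m$-rigidity plus pigeonhole, as in Theorem~\ref{thm:strict_chain}, or from bi-immunity via Lemma~\ref{lem:rigid_biimmune}.

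\emph{Clause 1.} I take $B=A_{(2)}$. Lemma~\ref{lem:bfin_equiv} gives $B\equiv_{bfin}A$, hence $B\equiv_m A$ and $B\le_{bfin}A$. Theorem~\ref{thm:strict_chain} with $k=1$, noting $A_{(1)}=A$, gives $A<_1 A_{(2)}$, so $B\not\le_1 A$.

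\emph{Clause 2.} I use blocks of unbounded length. Partition $\omega$ into consecutive intervals $I_y=[s_y,s_y+y]$ of length $y+1$, where $s_y=y(y+1)/2$. Let $g(x)$ be the unique $y$ with $x\in I_y$, and set $C=\{x: g(x)\in A\}$.
\begin{itemize}
\item $g$ is computable and finite-to-one, so $C\le_{fin}A$.
\item $y\mapsto s_y$ is injective and gives $A\le_1 C$, so $C\equiv_m A$.
\end{itemize}
Now suppose $C\le_{bfin}A$ via $h$ with bound $c$. For each $j\le c$, define $f_j(y)=h(s_y+j)$ if $y\ge j$, and $f_j(y)=y$ otherwise. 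Each $f_j$ is a total computable $m$-autoreduction of $A$, since $g(s_y+j)=y$ for $y\ge j$. By $m$-rigidity there is an $N$ such that $f_j(y)=y$ for all $j\le c$ and all $y\ge N$. Pick $y\ge\max(N,c)$. Then the $c+1$ distinct points $s_y,\dots,s_y+c$ of $I_y$ are all sent by $h$ to $y$, which contradicts the bound $c$. This is the same pigeonhole step as in Theorem~\ref{thm:strict_chain}. The only care needed is to use just finitely many $f_j$, namely $j\le c$, so that one common threshold $N$ exists.

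\emph{Clause 3.} I take the cylinder $D=\{x:\pi_1(x)\in A\}$. Then $D\le_m A$ via $\pi_1$, and $A\le_1 D$ via $y\mapsto\langle y,0\rangle$, so $D\equiv_m A$. Suppose $D\le_{fin}A$ via $h$. Fix any $y_0$; the column $R=\{\langle y_0,j\rangle:j\in\omega\}$ is infinite and computable, and it lies entirely in $D$ or entirely in $\overline D$. Since $h$ is finite-to-one, $h[R]$ is an infinite c.e. set. It is contained in $A$ if $y_0\in A$, and in $\overline A$ otherwise. This contradicts bi-immunity (Lemma~\ref{lem:rigid_biimmune}), exactly the phenomenon flagged before the definition of thickenings.

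I expect clause 2 to be the main obstacle, because finitely many autoreductions must capture a bound-$c$ violation. Anchoring the reduction at the first $c+1$ points of each long block resolves it.
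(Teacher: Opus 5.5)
Your proposal is correct and follows the paper's proof. You use the same three witnesses: $A_{(2)}$; a triangular thickening whose $y$-th block has $y+1$ elements (the paper's pyramid domain $\{\langle x,y\rangle : y\le x\}$ up to a computable bijection); and the cylinder of $A$. Your arguments for clauses 1 and 3 are the paper's.

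The only variation is in clause 2. There the paper builds a single autoreduction that, for each $x\ge c$, searches the column for a point not sent to $x$, so it moves every $x\ge c$. You instead use the $c+1$ fixed-offset autoreductions $f_j$ with a common threshold and the pigeonhole step of Theorem~\ref{thm:strict_chain}. Both arguments are valid.
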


\begin{proof}
We give explicit witnesses.
\begin{enumerate}
    \item Let $B = A_{(2)}$. By Lemma~\ref{lem:bfin_equiv} we have $B \le_{bfin} A$ and $B \equiv_m A$.
    Moreover, Theorem~\ref{thm:strict_chain} with $k=1$ yields $A_{(1)} <_1 A_{(2)}$, i.e.\ $A<_1 B$.
    Hence $B \not\le_1 A$.

    \item Let $D_{pyr} = \{ \langle x, y \rangle \mid y \le x \}$ be a ``pyramid'' domain, and fix a computable bijection $\rho: \omega \to D_{pyr}$.
    Define
    \[
    C = \{ n \in \omega \mid \pi_1(\rho(n)) \in A \}.
    \]
    Then $C \le_{fin} A$ via $q(n) = \pi_1(\rho(n))$, because the preimage of any $x$ corresponds exactly to a column of size $x+1$, which is finite.
    
    Conversely, $A\le_m C$ via $r(x)=\rho^{-1}(\langle x,0\rangle)$: in fact, observe that $\rho(r(x))=\langle x,0\rangle$, hence $\pi_1(\rho(r(x)))=x$; moreover, since $0\le x$ implies $\langle x,0\rangle\in D_{pyr}$ it follows that   
    \[
    x \in A \iff \pi_1(\langle x,0\rangle) \in A \iff r(x)\in C.
    \]
    Hence $C\equiv_m A$.

    Assume for a contradiction that $C \le_{bfin} A$ via some computable function $h$ bounded by a global constant $c \ge 1$.
    This induces a map $f: D_{pyr} \to \omega$, defined by $f(z) = h(\rho^{-1}(z))$, which is also bounded by $c$ and satisfies
    \[
    \pi_1(z) \in A \iff f(z) \in A.
    \]
    For any $x \ge c$, the column $P_x = \{ \langle x, y \rangle \mid y \le x \}$ has size $x+1 > c$.
    Thus $f$ cannot map all elements of $P_x$ to the single value $x$, so there exists some $y \le x$ such that $f(\langle x, y \rangle) \neq x$.

    Define a total computable function $g: \omega \to \omega$ as follows: for $x < c$, let $g(x)=x$; for $x \ge c$, search for the least $y \le x$ such that $f(\langle x, y \rangle) \neq x$, and set $g(x)=f(\langle x, y \rangle)$.
    Since $\pi_1(\langle x, y \rangle)=x$, we have $x \in A \iff g(x)\in A$, so $g$ is an $m$-autoreduction of $A$.
    But $g(x)\neq x$ for all $x \ge c$, contradicting the $m$-rigidity of $A$.
    Thus, $C \not\le_{bfin} A$.

    \item Let $D = A \times \omega$. Clearly $D \equiv_m A$, and $D \le_m A$ via $f(\langle x, y \rangle) = x$.
    Assume for a contradiction that $D \le_{fin} A$ via some finite-to-one computable function $h$.
    For any $x$, the infinite column $V_x = \{ \langle x, y \rangle \mid y \in \omega \}$ must be mapped by $h$ to an infinite c.e.\ set $W_x$ (if $W_x$ were finite, some element in $W_x$ would have infinitely many preimages in $V_x$, contradicting that $h$ is finite-to-one).
    Because $h$ preserves membership, if $x \in A$ then $W_x \subseteq A$, and if $x \notin A$ then $W_x \subseteq \overline{A}$.
    This means either $A$ or $\overline{A}$ contains an infinite c.e.\ set, contradicting the bi-immunity of $A$ (Lemma~\ref{lem:rigid_biimmune}).
    Thus, $D \not\le_{fin} A$.
\end{enumerate}
\end{proof}

\section{Infinite Antichains of Finite-One Degrees}

This section contains the main structural separation construction. Using calibrated domains, we
convert separation between computable parameters into finite-one incomparability inside a
fixed many-one degree. This provides the tool to answer Open Problem 2 of \cite{RSZ2026}.

\begin{definition}\label{def:calibrated_domain}
Let $S \subseteq \omega$ be a computable set. We define its \emph{calibrated domain} $D_S \subseteq \omega$ as:
$$ D_S = \{ \langle x, i \rangle \mid x \in S \text{ or } i = 0 \}. $$
\end{definition}

Notice that $D_S$ is an infinite computable set. We fix a standard computable bijection $\sigma_S : \omega \to D_S$. For any set $A\subseteq\omega$, we define the set $B_S \subseteq \omega$ as:
$$ B_S = \{ n \in \omega \mid \pi_1(\sigma_S(n)) \in A \} .$$

\begin{lemma} \label{lem:BS_equiv_A}
For any set $A\subseteq\omega$ and any computable set $S$, we have $B_S \equiv_m A$.
\end{lemma}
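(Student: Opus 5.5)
The plan is to exhibit two explicit computable reductions, one in each direction, mirroring the argument for the pyramid set $C$ in Proposition~\ref{prop:strict_hierarchy}. The underlying facts are that $B_S$ is simply $A$ pulled back along the projection $\pi_1\circ\sigma_S$, and that $D_S$ always contains the full ``zeroth row'' $\{\langle x,0\rangle : x\in\omega\}$, whatever $S$ is.

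For $B_S \le_m A$ I will take $q(n)=\pi_1(\sigma_S(n))$. This is total computable, because $\sigma_S$ is a computable bijection onto $D_S$ and $\pi_1$ is computable. By the definition of $B_S$ we have $n\in B_S \iff q(n)\in A$ directly, so nothing further is needed here.

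For $A\le_m B_S$ I will take $r(x)=\sigma_S^{-1}(\langle x,0\rangle)$. First, $\langle x,0\rangle\in D_S$ for every $x$, since the clause $i=0$ holds. Hence $\sigma_S^{-1}(\langle x,0\rangle)$ is defined. It is also computable: because $D_S$ is computable, we can search for the unique $n$ with $\sigma_S(n)=\langle x,0\rangle$, and this search terminates. Then $\pi_1(\sigma_S(r(x)))=\pi_1(\langle x,0\rangle)=x$, which gives $x\in A\iff r(x)\in B_S$.

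I do not expect a genuine obstacle. The only point needing care is the well-definedness of $\sigma_S$. $D_S$ is infinite because it contains the whole zeroth row, and it is computable because $S$ is computable and membership of $\langle x,i\rangle$ is decided by testing $x\in S$ or $i=0$. Therefore its increasing enumeration is a computable bijection $\omega\to D_S$ with computable inverse on $D_S$. The reduction $r$ is in fact injective, which I may record for later use.
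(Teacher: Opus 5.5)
Your proposal is correct and is essentially the paper's proof: it uses the same reductions $q(n)=\pi_1(\sigma_S(n))$ and $r(x)=\sigma_S^{-1}(\langle x,0\rangle)$, with totality of $r$ coming from $\langle x,0\rangle\in D_S$ and an effective search. One small correction: the search for $r(x)$ terminates because $\sigma_S$ is a computable bijection onto $D_S$; computability of $D_S$ matters only because it guarantees that such a $\sigma_S$ exists.
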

\begin{proof}
It is clear that $B_S \leq_m A$ via the computable function $q(n) = \pi_1(\sigma_S(n))$.
Conversely, $A \leq_m B_S$ via $r(x) = \sigma_S^{-1}(\langle x, 0 \rangle)$. 
Note that $\langle x,0\rangle\in D_S$ for every $x$, hence $r$ is total computable (by effective search).
Since $\sigma_S(r(x))=\langle x,0\rangle$, we have 
\[
r(x)\in B_S \iff \pi_1(\sigma_S(r(x)))\in A \iff \pi_1(\langle x,0\rangle)\in A \iff x\in A.
\]
\end{proof}

We now prove that if $T$ contains infinitely many elements not in $S$, a finite-one reduction cannot embed $B_T$ into $B_S$.

\begin{theorem} \label{thm:not_fin_reducible}
Let $A$ be an $m$-rigid set. If $S$ and $T$ are computable sets such that $T \setminus S$ is infinite, then $B_T \not\leq_{fin} B_S$.
\end{theorem}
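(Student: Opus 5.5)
Assume, for a contradiction, that $B_T \le_{fin} B_S$ via a finite-to-one total computable function $h$. As in the proof of Proposition~\ref{prop:strict_hierarchy}(2), I would move the reduction onto the domains. Define $f : D_T \to D_S$ by
\[
f(z) = \sigma_S\bigl(h(\sigma_T^{-1}(z))\bigr).
\]
Since $\sigma_T$ and $\sigma_S$ are computable bijections, $f$ is computable on the computable set $D_T$ and is finite-to-one. Unwinding the definitions of $B_T$ and $B_S$ gives, for every $z \in D_T$,
\[
\pi_1(z) \in A \iff \pi_1(f(z)) \in A .
\]
The goal is to extract from $f$ a total computable $m$-autoreduction of $A$ that moves infinitely many points. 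This contradicts $m$-rigidity directly, in the same spirit as Theorem~\ref{thm:strict_chain} and Proposition~\ref{prop:strict_hierarchy}.

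The key observation concerns points $x \in T \setminus S$. Above such an $x$, the domain $D_T$ contains the whole infinite column $\{\langle x,i\rangle : i \in \omega\}$. By contrast, $D_S$ contains only the single point $\langle x,0\rangle$ with first coordinate $x$, because $x \notin S$. If every $f(\langle x,i\rangle)$ had first coordinate $x$, then all of them would equal $\langle x,0\rangle$. That point would then have infinitely many $f$-preimages, contradicting that $f$ is finite-to-one. Hence for each $x \in T\setminus S$ there is some $i$ with $\pi_1(f(\langle x,i\rangle)) \neq x$.

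Now define $g:\omega\to\omega$ as follows. If $x \in T\setminus S$, search for the least $i$ with $\pi_1(f(\langle x,i\rangle)) \ne x$ and set $g(x) = \pi_1(f(\langle x,i\rangle))$. Otherwise set $g(x)=x$. Since $S$ and $T$ are computable, the case distinction is decidable. By the observation above, the search always halts on $T\setminus S$, so $g$ is total computable. The displayed equivalence gives $x \in A \iff g(x) \in A$ for all $x$, so $g$ is an $m$-autoreduction of $A$. Yet $g(x)\neq x$ for every $x$ in the infinite set $T\setminus S$, contradicting the $m$-rigidity of $A$.

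The only delicate point is the totality of $g$. One must not search above points $x\in S$, since there the column of $D_S$ is also infinite and the search might diverge. This is exactly why the search is restricted to $T\setminus S$, and why computability of both $S$ and $T$ is needed.
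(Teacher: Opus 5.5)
Your proof is correct, and it takes a different route from the paper's.

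The paper argues through bi-immunity. For $x \in T\setminus S$, the image $f(C_x)$ of the column is an infinite c.e. set. Its projection $V_x$ must be finite by Lemma~\ref{lem:rigid_biimmune}. Hence infinitely many image points share a first coordinate $y$, which forces $y \in S$. The autoreduction $g$ then searches for an $i$ with $\pi_1(f(\langle x,i\rangle)) \in S$, and $g(x)\neq x$ follows from $S \cap (T\setminus S)=\emptyset$.

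You use only that the fibre of $D_S$ over $x \notin S$ is the singleton $\{\langle x,0\rangle\}$. Finite-to-oneness alone then forces some $f(\langle x,i\rangle)$ off that fibre, so you search directly for a first coordinate $\neq x$. This is the same column-overflow argument as in Proposition~\ref{prop:strict_hierarchy}(2), with the bounded columns there replaced by a singleton fibre here.

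Your version is more elementary and self-contained. It bypasses bi-immunity (imported from \cite{Cintioli2026}) and the c.e.-image and pigeonhole step, and uses $m$-rigidity only through the single autoreduction $g$. The paper's version gives slightly more information: its $g$ maps $T\setminus S$ into $S$, so infinite columns of $D_T$ must be absorbed by infinite columns of $D_S$. That extra structure is not needed for the theorem.
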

\begin{proof}
Fix an $m$-rigid set $A$ and assume for a contradiction that $B_T \leq_{fin} B_S$ via a computable finite-to-one function $h$.
This induces a finite-to-one computable function on the domains, $f: D_T \to D_S$, defined by $f(z) = \sigma_S(h(\sigma_T^{-1}(z)))$.
By the validity of the reduction, for any $z \in D_T$, we have $\pi_1(z) \in A \iff \pi_1(f(z)) \in A$.

For each $x \in T \setminus S$, the element $x$ generates an infinite column in $D_T$:
$$ C_x = \{ \langle x, i \rangle \mid i \in \omega \} \subseteq D_T $$
Since $f$ is finite-to-one, the image $f(C_x)$ must be an infinite computably enumerable subset of $D_S$. Let $V_x = \{ \pi_1(w) \mid w \in f(C_x) \}$ be the projection of this image onto the first coordinate.
For every $y \in V_x$, there exists some $i \in \omega$ such that $\pi_1(f(\langle x, i \rangle)) = y$. Since the reduction preserves membership relative to $A$, $y \in A \iff x \in A$. 
If $V_x$ were infinite, it would constitute an infinite c.e.\ subset of $A$ (if $x \in A$) or of $\overline{A}$ (if $x \notin A$). However, by Lemma \ref{lem:rigid_biimmune}, $A$ is bi-immune, which strictly prohibits this. Thus, $V_x$ must be a finite set.

Because $V_x$ is finite but $f(C_x)$ is infinite, $f(C_x)$ must contain infinitely many pairs with the same first coordinate $y \in V_x$. By the definition of $D_S$, the only elements that possess infinitely many second coordinates are precisely those $y \in S$. Consequently, $V_x$ must contain at least one element belonging to $S$. In other words, $V_x \cap S \neq \emptyset$.

We exploit this to define a total computable function $g: \omega \to \omega$ as follows:
\begin{itemize}
    \item If $x \in T \setminus S$, we algorithmically compute $f(\langle x, i \rangle)$ for $i = 0, 1, 2, \dots$ until we find a pair whose first coordinate belongs to $S$. Because $V_x \cap S \neq \emptyset$ and $S$ is computable, this search is guaranteed to terminate. We set $g(x) = \pi_1(f(\langle x, i \rangle))$.
    \item If $x \notin T \setminus S$, we set $g(x) = x$.
\end{itemize}

We claim $g$ is an $m$-autoreduction of $A$. If $x \notin T \setminus S$, $g(x) = x$, trivially preserving membership. If $x \in T \setminus S$, $g(x) = y = \pi_1(f(\langle x, i \rangle))$. As shown earlier, $\pi_1(f(z)) \in A \iff \pi_1(z) \in A$. Here $\pi_1(z) = x$, so $y \in A \iff x \in A$. Thus, $x \in A \iff g(x) \in A$.

Since $A$ is $m$-rigid, $g$ must be the identity almost everywhere. However, for all $x \in T \setminus S$, the search guarantees that $g(x) \in S$. Since $S \cap (T \setminus S) = \emptyset$, we have $g(x) \neq x$ for all $x \in T \setminus S$. Because $T \setminus S$ is infinite, $g(x) \neq x$ for infinitely many $x$. This is a direct contradiction of the $m$-rigidity of $A$. Hence, $B_T \not\leq_{fin} B_S$.
\end{proof}

\begin{corollary} \label{cor:infinite_antichain}
The $m$-degree of any $m$-rigid set contains an infinite antichain of finite-one degrees.
\end{corollary}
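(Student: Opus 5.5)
The plan is to apply Theorem~\ref{thm:not_fin_reducible} to a suitable infinite family of computable sets that are pairwise ``almost incomparable'' in the sense of infinite set difference. Concretely, I would fix a computable partition of $\omega$ into infinitely many infinite computable sets, for instance
\[
S_k = \{\, \langle k, j \rangle \mid j \in \omega \,\} \qquad (k \in \omega).
\]
Each $S_k$ is computable and infinite, and for $k \neq l$ the sets $S_k$ and $S_l$ are disjoint. Hence both $S_k \setminus S_l = S_k$ and $S_l \setminus S_k = S_l$ are infinite.

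For each $k$ I would form the set $B_{S_k}$ built from the calibrated domain $D_{S_k}$ as in Definition~\ref{def:calibrated_domain}. By Lemma~\ref{lem:BS_equiv_A}, $B_{S_k} \equiv_m A$ for every $k$, so all the finite-one degrees $\deg_{fin}(B_{S_k})$ lie inside $\deg_m(A)$.

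Now fix $k \neq l$. Applying Theorem~\ref{thm:not_fin_reducible} with $T = S_k$ and $S = S_l$ gives $B_{S_k} \not\le_{fin} B_{S_l}$. Applying it symmetrically with the roles swapped gives $B_{S_l} \not\le_{fin} B_{S_k}$. Therefore the degrees $\deg_{fin}(B_{S_k})$ and $\deg_{fin}(B_{S_l})$ are incomparable under $\le_{fin}$, and in particular distinct. It follows that $\{\deg_{fin}(B_{S_k}) : k \in \omega\}$ is an infinite antichain of finite-one degrees in $\deg_m(A)$.

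There is no real obstacle here, since all the work is done in Theorem~\ref{thm:not_fin_reducible}. The only points to check are that the chosen sets $S_k$ are computable, which holds because the projections of the pairing function are computable, and that the relevant differences are infinite in both directions. Disjointness of infinite sets settles the second point immediately.
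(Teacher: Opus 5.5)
Your proposal is correct and follows the paper's proof: take pairwise disjoint infinite computable sets $S_k$, use Lemma~\ref{lem:BS_equiv_A} to place each $B_{S_k}$ in $\deg_m(A)$, and apply Theorem~\ref{thm:not_fin_reducible} in both directions to get incomparability. The only difference is that you give a concrete family (the columns $\{\langle k,j\rangle : j\in\omega\}$ of the pairing function), whereas the paper just takes such a family as given.
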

\begin{proof}
Let $\{S_k\}_{k \in \omega}$ be a family of mutually disjoint infinite computable sets. For any $i \neq j$, $S_i \cap S_j = \emptyset$, meaning $S_i \setminus S_j = S_i$, which is infinite. By Theorem \ref{thm:not_fin_reducible}, $B_{S_i} \not\leq_{fin} B_{S_j}$ and $B_{S_j} \not\leq_{fin} B_{S_i}$. 
The sequence $\{B_{S_k}\}_{k \in \omega}$ thus forms an infinite antichain of finite-one degrees inside the $m$-degree of $A$, since each $B_{S_k}$ belongs to $\deg_m(A)$ by Lemma \ref{lem:BS_equiv_A}.
\end{proof}

The existence of an infinite antichain enforces a strict constraint on the possible finite cardinalities of $fin$-degrees within a given $m$-degree.

\begin{question}[Richter, Stephan, Zhang \cite{RSZ2026}]
Are there nonrecursive many-one degrees consisting of at least two but at most finitely many finite-one degrees (fin-degrees)?
\end{question}

\begin{corollary} \label{cor:prob2_answer}
If the $m$-degree of $A$ consists of at least two but at most finitely many finite-one degrees, then $A$ is not $m$-rigid. Therefore, for Lebesgue-almost every set $A$ and in the sense of Baire category, the many-one degree $\deg_m(A)$ contains infinitely many pairwise incomparable $fin$-degrees.
\end{corollary}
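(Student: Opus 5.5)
The plan is to argue by contraposition: if $A$ is $m$-rigid, then $\deg_m(A)$ contains infinitely many finite-one degrees. This single fact excludes the hypothesis ``at least two but at most finitely many finite-one degrees.'' The source of infinitely many degrees is Corollary~\ref{cor:infinite_antichain}, applied to a family $\{S_k\}_{k\in\omega}$ of pairwise disjoint infinite computable sets, for instance $S_k=\{\langle k,n\rangle : n\in\omega\}$. It supplies sets $B_{S_k}\equiv_m A$ (Lemma~\ref{lem:BS_equiv_A}) such that $B_{S_i}\not\le_{fin} B_{S_j}$ whenever $i\neq j$ (Theorem~\ref{thm:not_fin_reducible}).

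First I check that the antichain really yields infinitely many distinct finite-one degrees. If $i\neq j$, then $B_{S_i}\not\le_{fin}B_{S_j}$, so $B_{S_i}\not\equiv_{fin}B_{S_j}$ and hence $\deg_{fin}(B_{S_i})\neq\deg_{fin}(B_{S_j})$. The map $k\mapsto\deg_{fin}(B_{S_k})$ is therefore injective into the family $\{\deg_{fin}(B): B\equiv_m A\}$, which is consequently infinite. Moreover, any two of these degrees are incomparable under $\le_{fin}$, because $\le_{fin}$ on degrees is defined through representatives. So for every $m$-rigid $A$, the degree $\deg_m(A)$ contains infinitely many pairwise incomparable $fin$-degrees. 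In particular it cannot consist of finitely many $fin$-degrees, and the first sentence of the corollary follows by contraposition.

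For the second sentence I combine this with the fact, recalled in Section~2 from \cite{Cintioli2026}, that the $m$-rigid sets form a class of Lebesgue measure $1$ which is also comeager: Martin-L\"of random sets and $1$-generic sets are both $m$-rigid. Every set in this class has the stated property. Hence the property holds for almost every $A$ and for comeager many $A$.

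There is no real obstacle here, since all the work was done in Theorem~\ref{thm:not_fin_reducible}. The only point needing care is confirming that ``pairwise incomparable'' transfers from the sets $B_{S_k}$ to their finite-one degrees, which is immediate because $\le_{fin}$ on degrees is evaluated on representatives. I would also note that the lower bound ``at least two'' in the hypothesis plays no role in the argument: the conclusion already rules out every finite number of $fin$-degrees.
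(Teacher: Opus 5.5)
Your proposal is correct and follows the paper's proof: assume $A$ is $m$-rigid, apply Corollary~\ref{cor:infinite_antichain} to contradict the finiteness hypothesis, then use that $m$-rigid sets form a measure-$1$, comeager class. The extra checks you add are details the paper leaves implicit: an explicit disjoint family $S_k$, and the fact that non-reducibility between representatives yields distinct, pairwise incomparable $fin$-degrees.
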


\begin{proof}
If $A$ were $m$-rigid, Corollary~\ref{cor:infinite_antichain} would yield infinitely many pairwise incomparable $fin$-degrees inside $\deg_m(A)$, contradicting the hypothesis.
The measure and category statement follows because $m$-rigid sets form a measure-$1$ and comeager class.
\end{proof}

\section{Bounded Finite-One Degrees and Open Problem 3}

We finally address the third open problem, which focuses on the strict substructure of bounded finite-one degrees.

\begin{question}[Richter, Stephan, Zhang \cite{RSZ2026}]
Are there bounded finite-one degrees consisting exactly of a dense linearly ordered set of one-one degrees?
\end{question}

For a bounded finite-one degree to consist \emph{exactly} of a linearly ordered set, it must fundamentally lack any incomparable pair of $1$-degrees. We prove that typical degrees violate this by restricting the column size in our calibrated domains to achieve a global bounded finite-one equivalence, while constructing an infinite antichain of $1$-degrees.
We note that Richter, Stephan, and Zhang observed that if $A$ is Martin-L\"of random,
then $\deg_{bfin}(A)$ contains an infinite antichain of $1$-degrees \cite[Remark~19]{RSZ2026}.
Our argument below strengthens this for typicality in the sense of Baire category,
by proving the existence of such an antichain for every $m$-rigid set via bounded calibrated domains.
\begin{definition}\label{def:bounded_calibrated_domain}
Let $S \subseteq \omega$ be a computable set. We define its \emph{bounded calibrated domain} $E_S \subseteq \omega$ as:
$$ E_S = \{ \langle x, 0 \rangle \mid x \in \omega \} \cup \{ \langle x, 1 \rangle \mid x \in S \}. $$
\end{definition}

Notice that $E_S$ is a computable set where every $x \in \omega$ has exactly two copies if $x \in S$, and exactly one copy if $x \notin S$. We fix a standard computable bijection $\tau_S : \omega \to E_S$. For any set $A\subseteq\omega$, we define the set $C_S \subseteq \omega$ as:
$$ C_S = \{ n \in \omega \mid \pi_1(\tau_S(n)) \in A \} .$$

\begin{lemma} \label{lem:CS_equiv_A_bfin}
For any set $A\subseteq\omega$ and any computable set $S$, we have $C_S \equiv_{bfin} A$.
\end{lemma}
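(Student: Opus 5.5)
The plan mirrors Lemma~\ref{lem:BS_equiv_A}, checking in addition that both reductions have uniformly bounded fibres. Two directions are needed: $C_S \le_{bfin} A$ and $A \le_{bfin} C_S$. The second will in fact be a one-one reduction, which suffices since $\le_1 \Rightarrow \le_{bfin}$.

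For $C_S \le_{bfin} A$, I take $q(n) = \pi_1(\tau_S(n))$. This is total computable because $\tau_S$ is a computable bijection onto $E_S$ and $\pi_1$ is computable. By the definition of $C_S$ we have $n \in C_S \iff q(n) \in A$, so $q$ is an $m$-reduction. For the bound, fix $x$. Then
\[
q^{-1}(x) = \tau_S^{-1}\bigl(\{ z \in E_S : \pi_1(z) = x \}\bigr).
\]
By Definition~\ref{def:bounded_calibrated_domain}, the set on the right is $\{\langle x,0\rangle\}$ when $x \notin S$ and $\{\langle x,0\rangle,\langle x,1\rangle\}$ when $x \in S$. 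Since $\tau_S$ is a bijection, $|q^{-1}(x)| \le 2$ for every $x$, so $q$ is bounded finite-one with global constant $c=2$. This uniform bound on column sizes is the only point where $E_S$ differs essentially from $D_S$, and it is exactly what upgrades the reduction from $\le_{fin}$ to $\le_{bfin}$.

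For $A \le_{bfin} C_S$, I take $r(x) = \tau_S^{-1}(\langle x,0\rangle)$. The pair $\langle x,0\rangle$ always lies in $E_S$, so $r$ is total computable: we search for the unique $n$ with $\tau_S(n) = \langle x,0\rangle$. The map $r$ is injective, because $x \mapsto \langle x,0\rangle$ is injective and $\tau_S^{-1}$ is a bijection. It is a reduction, since $\pi_1(\tau_S(r(x))) = x$ and hence $x \in A \iff r(x) \in C_S$. So $A \le_1 C_S$, and therefore $A \le_{bfin} C_S$ with bound $c=1$.

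I do not expect a genuine obstacle. The only point to state carefully is the counting of each fibre of $q$, which rests on the bijectivity of $\tau_S$ together with the explicit description of $E_S$. Note also that $A$ may be arbitrary and $S$ any computable set, because neither direction uses any property of $A$.
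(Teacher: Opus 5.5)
Your proposal is correct and follows the paper's proof essentially verbatim: the same reduction $q(n)=\pi_1(\tau_S(n))$ with global bound $c=2$ for $C_S \le_{bfin} A$, and the same injective reduction $r(x)=\tau_S^{-1}(\langle x,0\rangle)$ for $A \le_1 C_S$. Your explicit description of each fibre $q^{-1}(x)$ as the $\tau_S$-preimage of the at most two elements of $E_S$ with first coordinate $x$ spells out the counting step that the paper only states.
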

\begin{proof}
It is clear that $C_S \leq_m A$ via the computable function $q(n) = \pi_1(\tau_S(n))$. Since every element $x \in \omega$ has at most two copies in $E_S$, the function $q$ is strictly bounded finite-to-one (bounded by $c=2$). Thus $C_S \le_{bfin} A$.

Conversely, $A \leq_1 C_S$ via $r(x) = \tau_S^{-1}(\langle x, 0 \rangle)$. Since $\langle x, 0 \rangle \in E_S$ for every $x$, $r$ is total, computable, and injective. Because $\le_1$ implies $\le_{bfin}$, we have $A \le_{bfin} C_S$. Thus, $C_S \equiv_{bfin} A$.
\end{proof}

We now show that infinite differences between the computable index sets preclude $1$-reducibility.

\begin{theorem} \label{thm:not_1_reducible_bfin}
Let $A$ be an $m$-rigid set. If $S$ and $T$ are computable sets such that $T \setminus S$ is infinite, then $C_T \not\leq_1 C_S$.
\end{theorem}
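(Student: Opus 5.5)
Assume for a contradiction that $C_T \le_1 C_S$ via an injective computable $h$. We follow the pattern of Theorem~\ref{thm:strict_chain} and Theorem~\ref{thm:not_fin_reducible}: transfer $h$ to the domains, extract finitely many $m$-autoreductions of $A$, and combine rigidity with a pigeonhole count. Define $f:E_T\to E_S$ by $f(z)=\tau_S(h(\tau_T^{-1}(z)))$. Then $f$ is injective and computable, and it satisfies $\pi_1(z)\in A \iff \pi_1(f(z))\in A$ for all $z\in E_T$.

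Next define two total computable functions $f_0,f_1:\omega\to\omega$. Let $f_0(x)=\pi_1(f(\langle x,0\rangle))$; this is defined for every $x$, since $\langle x,0\rangle\in E_T$ always. Let $f_1(x)=\pi_1(f(\langle x,1\rangle))$ if $x\in T$, and $f_1(x)=x$ otherwise; this is total because $T$ is computable. Each $f_j$ is an $m$-autoreduction of $A$: in the first case by the membership property of $f$ with $\pi_1(\langle x,j\rangle)=x$, and in the default case trivially. Since $A$ is $m$-rigid, there is a common $N$ with $f_0(x)=f_1(x)=x$ for all $x\ge N$.

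Since $T\setminus S$ is infinite, fix $x\ge N$ with $x\in T\setminus S$. Both $\langle x,0\rangle$ and $\langle x,1\rangle$ lie in $E_T$, and their $f$-images lie in $E_S$ and have first coordinate $x$. But $x\notin S$, so the only element of $E_S$ with first coordinate $x$ is $\langle x,0\rangle$. Hence $f(\langle x,0\rangle)=f(\langle x,1\rangle)=\langle x,0\rangle$, which contradicts the injectivity of $f$ (equivalently, of $h$). Therefore $C_T\not\le_1 C_S$.

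The main point needing care is totality. Each autoreduction must be defined on all of $\omega$, which is why $f_1$ is given a default value off $T$, and that default is computable only because $T$ is computable. With this handled, only the two copies $\langle x,0\rangle,\langle x,1\rangle$ need to be compared, so bi-immunity is not needed here, unlike in Theorem~\ref{thm:not_fin_reducible}.
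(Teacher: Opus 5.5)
Your proposal is correct and matches the paper's proof: you transfer $h$ to an injective map $E_T\to E_S$, extract the same two autoreductions (yours $f_0,f_1$, the paper's $g_0,g_1$, with the default value $x$ off the computable set $T$), and apply rigidity to get a common threshold $N$. For $x\ge N$ in $T\setminus S$, both $\langle x,0\rangle$ and $\langle x,1\rangle$ are then forced to map to $\langle x,0\rangle$, contradicting injectivity, exactly as the paper does.
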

\begin{proof}
Fix an $m$-rigid set $A$ and assume for a contradiction that $C_T \leq_1 C_S$ via an injective computable function $h$.
This induces an injective computable function on the domains, $f: E_T \to E_S$, defined by $f(z) = \tau_S(h(\tau_T^{-1}(z)))$.
By the validity of the reduction, for any $z \in E_T$, we have $\pi_1(z) \in A \iff \pi_1(f(z)) \in A$.

We define two total computable functions $g_0, g_1: \omega \to \omega$ as follows:
\begin{itemize}
    \item $g_0(x) = \pi_1(f(\langle x, 0 \rangle))$ for all $x \in \omega$. (Well-defined since $\langle x, 0 \rangle \in E_T$ for all $x$).
    \item $g_1(x) = \pi_1(f(\langle x, 1 \rangle))$ if $x \in T$, and $g_1(x) = x$ if $x \notin T$. (Well-defined and computable since $T$ is a computable set).
\end{itemize}

Since $f$ preserves membership relative to $A$, $x \in A \iff \pi_1(\langle x, 0 \rangle) \in A \iff \pi_1(f(\langle x, 0 \rangle)) \in A \iff g_0(x) \in A$. Thus, $g_0$ is a valid $m$-autoreduction of $A$.
Similarly, for $x \in T$, $x \in A \iff \pi_1(\langle x, 1 \rangle) \in A \iff \pi_1(f(\langle x, 1 \rangle)) \in A \iff g_1(x) \in A$. For $x \notin T$, $g_1(x) = x$, trivially preserving membership. Thus, $g_1$ is also a valid $m$-autoreduction of $A$.

Because $A$ is $m$-rigid, both $g_0$ and $g_1$ must be the identity almost everywhere. Hence, there exists an $N \in \omega$ such that for all $x \ge N$, $g_0(x) = x$ and $g_1(x) = x$.

Since $T \setminus S$ is infinite, we can choose an $x \ge N$ such that $x \in T \setminus S$. 
Because $x \ge N$, we have $g_0(x) = x$, meaning that $f(\langle x, 0 \rangle) \in E_S$ has first coordinate $x$. Since $x \notin S$, the definition of $E_S$ dictates that the \emph{only} element in $E_S$ with first coordinate $x$ is $\langle x, 0 \rangle$. Therefore, we must have $f(\langle x, 0 \rangle) = \langle x, 0 \rangle$.

Simultaneously, since $x \in T$ and $x \ge N$, we have $g_1(x) = x$, meaning that $f(\langle x, 1 \rangle) \in E_S$ also has first coordinate $x$. By the exact same logic, we must have $f(\langle x, 1 \rangle) = \langle x, 0 \rangle$.

This implies $f(\langle x, 0 \rangle) = f(\langle x, 1 \rangle) = \langle x, 0 \rangle$. However, $\langle x, 0 \rangle$ and $\langle x, 1 \rangle$ are distinct elements in $E_T$. This directly contradicts the injectivity of $f$ (and thus of $h$). Hence, $C_T \not\leq_1 C_S$.
\end{proof}

\begin{corollary} \label{cor:prob3_answer}
For Lebesgue-almost every set $A$ and in the sense of Baire category (comeager), the bounded finite-one degree $\deg_{bfin}(A)$ is not linearly ordered. Thus, Open Problem 3 of \cite{RSZ2026} has a negative answer for typical sets.
\end{corollary}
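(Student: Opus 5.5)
The plan is to combine Lemma~\ref{lem:CS_equiv_A_bfin} with Theorem~\ref{thm:not_1_reducible_bfin}, exactly as Corollary~\ref{cor:infinite_antichain} combined Lemma~\ref{lem:BS_equiv_A} with Theorem~\ref{thm:not_fin_reducible}. Then I will transfer the result to almost every set and to a comeager class, using the fact from \cite{Cintioli2026} that the $m$-rigid sets form a measure-$1$, comeager class.

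Fix an $m$-rigid set $A$ and a family $\{S_k\}_{k\in\omega}$ of pairwise disjoint infinite computable sets. For instance, take $S_k=\{\langle k,n\rangle : n\in\omega\}$; any computable partition of $\omega$ into infinitely many infinite computable pieces will do. For $i\neq j$ we have $S_i\setminus S_j=S_i$, which is infinite, and likewise $S_j\setminus S_i=S_j$. Applying Theorem~\ref{thm:not_1_reducible_bfin} in both directions gives $C_{S_i}\not\le_1 C_{S_j}$ and $C_{S_j}\not\le_1 C_{S_i}$. By Lemma~\ref{lem:CS_equiv_A_bfin}, every $C_{S_k}$ lies in $\deg_{bfin}(A)$. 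Hence the sets $C_{S_k}$ represent pairwise $1$-incomparable $1$-degrees inside $\deg_{bfin}(A)$, which is an infinite antichain.

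A linear order requires any two of its elements to be comparable. Already two incomparable $1$-degrees such as $\deg_1(C_{S_0})$ and $\deg_1(C_{S_1})$ show that the $1$-degrees inside $\deg_{bfin}(A)$ are not linearly ordered under $\le_1$. In particular, $\deg_{bfin}(A)$ cannot consist exactly of a dense linearly ordered set of $1$-degrees. Every $m$-rigid set has this property, and by \cite{Cintioli2026} (1-generic and Martin-L\"of random sets are $m$-rigid) the class of $m$-rigid sets has measure $1$ and is comeager. The corollary therefore holds for almost every $A$ and for comeager many $A$. Consequently, any set witnessing a positive answer to Open Problem~3 must be non-$m$-rigid.

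There is no real obstacle here; the substantive work is in Theorem~\ref{thm:not_1_reducible_bfin}. The one point to state carefully is what ``linearly ordered'' means for a $bfin$-degree: the partial order $\le_1$ on the $1$-degrees it contains. Once that is fixed, a single incomparable pair suffices, and the infinite antichain is a bonus.
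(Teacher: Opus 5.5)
Your proposal is correct and follows essentially the same route as the paper: a family of pairwise disjoint infinite computable sets $S_k$, Theorem~\ref{thm:not_1_reducible_bfin} applied in both directions, Lemma~\ref{lem:CS_equiv_A_bfin} to place every $C_{S_k}$ in $\deg_{bfin}(A)$, and the measure-$1$, comeager size of the class of $m$-rigid sets to pass to typical $A$. Your explicit choice $S_k=\{\langle k,n\rangle : n\in\omega\}$ and your remark that a single incomparable pair already suffices match the paper's observation that an antichain of size $\ge 2$ rules out a linear order.
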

\begin{proof}
Let $\{S_k\}_{k \in \omega}$ be a family of mutually disjoint infinite computable sets. By Theorem \ref{thm:not_1_reducible_bfin}, $C_{S_i} \not\leq_1 C_{S_j}$ and $C_{S_j} \not\leq_1 C_{S_i}$. Since all $C_{S_k}$ belong to $\deg_{bfin}(A)$, the bounded finite-one degree of any $m$-rigid set contains an infinite antichain of $1$-degrees. A partially ordered set containing an antichain of size $\ge 2$ cannot be linearly ordered. Since typical sets are $m$-rigid, typical bounded finite-one degrees are never linearly ordered.
\end{proof}

\section{Conclusion}

We conclude by summarising the fine-structure picture that emerges from $m$-rigidity and by
relating it back to the three open problems of Richter, Stephan, and Zhang \cite{RSZ2026}.
By generalising the application of $m$-rigidity, we have uncovered a deeply fractal-like structure within typical many-one degrees. 

For almost all sets (and, in fact, for a comeager class of sets):
\begin{enumerate}
    \item The $m$-degree admits a least finite-one degree (Open Problem 1).
    \item The $m$-degree contains an infinite antichain of finite-one degrees, preventing it from consisting of only finitely many finite-one degrees (Open Problem 2).
    \item A single bounded finite-one degree already contains both an infinite strict ascending chain and an infinite antichain of $1$-degrees, precluding any possible linear order (Open Problem 3).
\end{enumerate}

This yields a clear measure-theoretic and category-theoretic picture of the fine structure
induced by finite-one, bounded finite-one, and one-one reducibility inside a typical many-one degree.
Consequently, the class of potential counterexamples to Open Problem~1 and of sets exhibiting the
configurations sought in Open Problems~2 and~3 is contained in a null and meager set.

\section*{Acknowledgments}

This work is the result of an extended human--AI collaboration. 
Several structural ideas and technical arguments emerged from exploratory interaction with AI-based reasoning systems (Gemini Deep Think (Google DeepMind) and ChatGPT Pro (OpenAI)), which were used at different stages of the conceptual development and technical verification of this work. 
The author has fully reworked and verified all arguments and bears sole responsibility for their correctness.


\begin{thebibliography}{9}

\bibitem{Cintioli2026}
P. Cintioli.
\textit{Rigid many-one degrees contain infinite antichains of 1-degrees}.
arXiv preprint, arXiv:2602.19960v3 [math.LO], 2026.

\bibitem{Maslova1979}
T.\,M.~Maslova.
\textit{Veroyatnostnye Metody i Kibernetika} (in Russian).
Kazan University, Kazan, vol.~15, pp.~51--60, 1979.

\bibitem{Odifreddi1981}
P.\,G.~Odifreddi,
\emph{Strong reducibilities},
Bull.\ Amer.\ Math.\ Soc.\ (N.S.) \textbf{4} (1981), no.~1, 37--86.
doi:\,10.1090/S0273-0979-1981-14863-1.

\bibitem{Odifreddi1999}
P.\,G.~Odifreddi,
\emph{Reducibilities},
in \emph{Handbook of Computability Theory},
E.\,R.~Griffor (ed.),
Studies in Logic and the Foundations of Mathematics, vol.~140,
Elsevier/North-Holland, Amsterdam, 1999, pp.~89--119.
doi:\,10.1016/S0049-237X(99)80019-6.


\bibitem{OdifreddiCRT}
P.~G.~Odifreddi,
\emph{Classical Recursion Theory},
Studies in Logic and the Foundations of Mathematics, vol.~125,
North-Holland, Amsterdam, 1989.


\bibitem{RSZ2026}
L. Richter, F. Stephan, X. Zhang.
\textit{Chains and Antichains Inside Many-One Degrees and Variants}.
Submitted preprint, available at \url{https://linus-richter.github.io}, 2026.


\bibitem{Soare1987}
R.~I.~Soare,
\emph{Recursively Enumerable Sets and Degrees: A Study of Computable Functions and Computably Generated Sets},
Perspectives in Mathematical Logic,
Springer-Verlag, Berlin, 1987.

\end{thebibliography}
\end{document}